\numberwithin{equation}{section} \pagestyle{plain}
\newtheorem{theorem}{Theorem}[section]
\newtheorem{proposition}{Proposition}[section]
\newtheorem{remark}{Remark}
\def\XX{X}
\def\xx{\boldsymbol{x}}
\def\FF{\boldsymbol{F}}
\def\RRR{\boldsymbol{R}}
\def\FFF{\mathcal{F}}
\def\QQQ{\mathbb{Q}}
\def\RR{\mathbb{R}}
\def\ff{\boldsymbol{f}}
\def\design{\mathcal{S}_m}
\def\expval{\mathbf{E}}
\def\QQQ{\mathcal{Q}_m}
\DeclareMathOperator{\diag}{diag}
\DeclareMathOperator\Binomial{Binomial}
\begin{document}

\author{Roberto Fontana \thanks{
Department of Mathematical Sciences, Politecnico di Torino, roberto.fontana@polito.it} and Patrizia
Semeraro\thanks{
Department of Mathematical Sciences, Politecnico di Torino, patrizia.semeraro@polito.it} }

\date{}
\title{Characterization  of multivariate Bernoulli  distributions with given margins} \maketitle

\begin{abstract}
We express each Fr\'echet class of multivariate Bernoulli distributions with given margins as the
convex hull of a set of densities, which  belong to the same Fr\'echet class. This characterisation allows us to establish whether a given correlation matrix is compatible with the assigned margins and, if it is, to easily construct one of the corresponding joint densities.
We  reduce the problem of finding a density belonging to a Fr\'echet class and with given correlation matrix to the
solution of a linear system of equations.  Our methodology  also provides
the bounds that each correlation must satisfy to be compatible with the assigned margins. An algorithm and its use in some examples is shown.

\noindent {\bf Keywords}:
Algebraic statistics; Correlation; Fr\'echet class; Multivariate binary distribution; Simulation.

\end{abstract}

\section{Introduction}

Dependent binary variables play a key role in many important scientific fields such as clinical trials and health studies.
The problem of the simulation of correlated binary data is extensively addressed in the statistical literature, e.g. \cite{chaganty2006range}, \cite{haynes2016simulating}, \cite{shults2016simulating} and \cite{kang2001generating}. Simulation studies are a useful tool for analysing extensions or alternatives to current estimating methodologies, such as generalised linear mixed models, or for the evaluation of statistical procedures for marginal regression models (\cite{qaqish2003family}). The simulation problem consists of constructing multivariate distributions for given Bernoulli marginal distributions and a given correlation matrix ${\rho}$.
Frequently,  assumptions are made about the correlation structure. Probably the most common is equicorrelation, e.g. \cite{chaganty2006range}. A popular approach also uses working correlation matrices (\cite{liang1986longitudinal} and  \cite{zeger1986longitudinal}),   such as  first order moving average correlations or first order autoregressive correlations (\cite{oman2009easily} and references therein). An important issue for these simulation procedures is the compatibility of marginal binary variables and their correlations, since problems may arise when the margins and the correlation matrix are not compatible (\cite{crowder1995use}, \cite{rao2004efficiency} and \cite{chaganty2006range}). The range of admissible correlation matrices for binary variables is well known in the bivariate case.  This problem has been widely identified in the literature, but, to the best of our knowledge no effective solution exists for multivariate binary distributions with more than three variables (\cite{chaganty2006range}).

We propose a new but simple methodology to characterise Bernoulli variables belonging to a given Fr\'echet class, i.e. with given marginal distributions. This characterisation allows us to establish  whether a given correlation matrix is compatible with the assigned margins and, if  it is, to easily construct one of the corresponding joint densities. It also provides  the bounds that each correlation must satisfy to be compatible with the assigned margins. Furthermore, if the correlation structure and the margins are not compatible, we can find a new correlation matrix which is close to the  desired one but compatible with the given margins. It is worth noting that this methodology  puts no restriction either on the number of variables or on the correlation structure. It also provides a new  computational procedure to simulate multivariate distributions of binary variables with assigned margins and given moments.

The proposed methodology is based on a polynomial representation of all the multivariate Bernoulli distributions of a given Fr\'echet class, i.e. of all the distributions with fixed Bernoulli margins. This representation is linked to the  Farlie-Gumbel-Morgesten copula (\cite{nelsen2006introduction}). It allows us to write each Fr\'echet class  as the convex hull of  the \emph{ray densities}, which are densities that belong to the Fr\'echet class under consideration. By so doing, the problem of finding one distribution with given moments in a Fr\'echet class is reduced to the solution of a linear system of equations.


\section{Preliminaries}\label{Preliminary}

Let $\mathbb{F}_m$ be the set of $m$-dimensional distributions which have Bernoulli univariate marginal distributions. Let us consider the Fr\'echet class  $\mathcal{F}(p_1, \dots, p_m)\subseteq\mathbb{F}_m$ of distribution functions in $\mathbb{F}_m$ which have the same Bernoulli marginal distributions $B(p_i), 0<p_i<1, i=1,\ldots,m$.
If  $\XX=(X_1, \dots, X_m)$ is  a random vector with joint distribution in $\mathcal{F}(p_1, \dots, p_m)$, we denote
\begin{itemize}
	\item its cumulative distribution function by $F_p$ and its density function by $f_p$ where $p=(p_1,\ldots,p_m)$;
	\item the column vector which contains the values of $F_p$ and $f_p$ over $\design:=\{0, 1\}^m$, with a small abuse of notation, still by $\FF_p = (F_p(x):x\in\design)$ and $\ff_p = (f_p(x):x\in\design)$ respectively; we make the non-restrictive hypothesis that $\design$ is ordered according to the reverse-lexicographical criterion;
	\item the marginal cumulative distribution function and the marginal density function of $X_i$ by $F_{p,i}$ and $f_{p,i}$  respectively,  $i=1,\ldots,m$;
	\item the values $f_{p,i}(0) \equiv F_{p,i}(0)$ and $f_{p,i}(1)$ by $q_i$ and $p_i$ respectively,  $i=1,\ldots,m$.
\end{itemize}

We observe that $q_i=1-p_i$ and that the expected value of $X_i$ is $p_i$, $\expval[X_i]=p_i$, $i=1,\dots,m$.

Given two matrices  $A\in \mathcal{M}(n\times m)$ and $ B\in \mathcal{M}(d\times l)$ the matrix $A\otimes B\in \mathcal{M}(nd\times ml)$ indicates their Kronecker product and $A^{\otimes n}$ is $\underbrace{A\otimes \ldots \otimes A}_{n \text{ times}}$.

If we consider a Bernoulli variable $B(\tau), 0<\tau<1$, with $F_\tau$ and $f_\tau$ as cumulative and density function respectively, the following holds
\begin{equation*}
\left(
\begin{array}{c}
f_\tau(0) \\
f_\tau(1)
\end{array}
\right)=D \cdot
\left(
\begin{array}{c}
F_\tau(0) \\
F_\tau(1)
\end{array}
\right)
\label{eq:F2f}
\end{equation*}
where $D=\left(
\begin{array}{rr}
1 & 0 \\
-1 & 1
\end{array}
\right)
$ is the \emph{difference matrix}.

It follows that given $F_p$ and $f_p$ in $\mathcal{F}(p_1, \dots, p_m)$ we have
\begin{equation}
\label{eq:F2flarge}
\ff_p=D^{\otimes m} \FF_p.
\end{equation}

Finally we can write $\ff_p \in \FFF(p_1, \dots, p_m)$, $\FF_p \in \FFF(p_1, \dots, p_m)$ and $\XX\in \FFF(p_1, \dots, p_m)$.

\section{Construction of multivariate Bernoulli distributions with given margins}\label{FGM}
%
We give a polynomial and matrix representation of \emph{all} the $F_p \in \FFF(p_1, \dots, p_m)$. We make the non-restrictive hypothesis that  $\{q_1,1\} \times \ldots \times \{q_m,1\}$ is ordered according to the reverse-lexicographical criterion. We denote $\{q_1,1\} \times \ldots \times \{q_m,1\}$ by $\QQQ$.
%

\begin{theorem}\label{cumrep}
Any distribution $F_p\in \mathcal{F}(p_1, \dots, p_m)$ admits the following representation over $\QQQ$
\begin{equation*}\label{cum}
\FF_p = \Lambda_p U_p \theta
\end{equation*}
where
 $\Lambda_p= \diag ( q_1^{(1-\alpha_1)}\cdot \ldots \cdot q_m^{(1-\alpha_m)}  , (\alpha_1,\ldots,\alpha_m) \in \design)$,
$U_p = U_{p_1} \otimes \ldots \otimes U_{p_m}$,
$U_{p_i} =\left(
\begin{array}{rr}
1& 1-q_i \\
1 & 0\\
\end{array}%
\right), i=1,\ldots,m$ and $\theta = (\theta_0, \theta_m, \theta_{m-1}, \theta_{m,m-1}, \ldots, \theta_{12\ldots m})$.

 Necessary conditions for $F_p$ being a distribution are $\theta_0=1$ and $\theta_i=0, i=1,\ldots,m$.
\end{theorem}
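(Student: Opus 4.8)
The plan is to establish the two assertions in turn: (i) that every $F_p\in\mathcal{F}(p_1,\dots,p_m)$ admits \emph{some} vector $\theta$ with $\FF_p=\Lambda_p U_p\theta$, and (ii) that membership in $\mathcal{F}(p_1,\dots,p_m)$ forces $\theta_0=1$ and $\theta_i=0$ for $i=1,\dots,m$.

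For (i) the quickest route is linear algebra: I would show that $\Lambda_p U_p$ is an invertible $2^m\times 2^m$ matrix, so that $\theta:=(\Lambda_p U_p)^{-1}\FF_p$ is the (unique) vector realising the representation. Indeed $\Lambda_p$ is diagonal with strictly positive diagonal entries $\prod_{i=1}^{m}q_i^{1-\alpha_i}$, hence invertible, and each $U_{p_i}$ has $\det U_{p_i}=-(1-q_i)=-p_i\neq 0$, so $U_p=U_{p_1}\otimes\cdots\otimes U_{p_m}$ is invertible as a Kronecker product of invertible factors. I would accompany this with the structural remark that explains the shape of the factorisation: identifying $\design$ with $\QQQ$ through $x_i\mapsto v_i:=F_{p,i}(x_i)\in\{q_i,1\}$, the identity $\FF_p=\Lambda_p U_p\theta$ says precisely that $F_p$ expands, on the grid $\QQQ$, in the Farlie-Gumbel-Morgenstern basis $\bigl\{\,\prod_{i=1}^{m}v_i\prod_{i\in S}(1-v_i):S\subseteq\{1,\dots,m\}\,\bigr\}$, with $\theta_S$ the coefficient indexed by the subset $S$ (so $\theta_\emptyset=\theta_0$, $\theta_{\{i\}}=\theta_i$, $\theta_{\{i,j\}}=\theta_{ij}$, and so on). That this family is a basis of the $2^m$-dimensional space of functions on $\QQQ$ is the invertibility of $\Lambda_p U_p$ seen on the function side: it is the image of the monomial basis under the invertible substitution $v_i\mapsto 1-v_i$ followed by multiplication by $\prod_{i=1}^{m}v_i$, which is nowhere zero on $\QQQ$.

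For (ii) I would expand the Kronecker product to obtain, for every $x\in\design$ with $B:=\{i:x_i=0\}$,
\begin{equation*}
F_p(x)=(\Lambda_p U_p\theta)_x=\Bigl(\prod_{i\in B}q_i\Bigr)\sum_{S\subseteq B}\theta_S\prod_{i\in S}p_i ,
\end{equation*}
the restriction $S\subseteq B$ arising because the $(x_i,y_i)$ entry of $U_{p_i}$ equals $1$ when $y_i=0$ and $(1-q_i)\,\mathbf{1}\{x_i=0\}=p_i\,\mathbf{1}\{x_i=0\}$ when $y_i=1$. Taking $x=(1,\dots,1)$ (so $B=\emptyset$), the right-hand side collapses to $\theta_0$, while the left-hand side is $F_p(1,\dots,1)=1$ because $F_p$ is a distribution function; hence $\theta_0=1$. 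Taking the point $x$ with $x_j=0$ and $x_i=1$ for $i\neq j$ (so $B=\{j\}$), the right-hand side equals $q_j(\theta_0+\theta_j p_j)=q_j(1+\theta_j p_j)$, whereas the left-hand side equals $F_{p,j}(0)=q_j$ since the $j$-th margin is $B(p_j)$; therefore $\theta_j p_j=0$ and, as $0<p_j<1$, $\theta_j=0$.

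The main --- though modest --- obstacle is pure bookkeeping: one must check that, under the reverse-lexicographic ordering of $\design$ and of $\QQQ$, the Kronecker products $\diag(q_1,1)\otimes\cdots\otimes\diag(q_m,1)$ and $U_{p_1}\otimes\cdots\otimes U_{p_m}$ are indexed exactly as $\Lambda_p$ and $U_p$ in the statement, and that the displayed expansion of $(\Lambda_p U_p\theta)_x$ is the correct one. Everything else is the invertibility above together with the two defining features of a member of $\mathcal{F}(p_1,\dots,p_m)$: total mass one, which yields $\theta_0=1$, and the prescribed Bernoulli margins, which yield $\theta_i=0$.
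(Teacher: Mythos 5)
Your proposal is correct and follows essentially the same route as the paper: invertibility of $\Lambda_p U_p$ (via $\det\Lambda_p\neq 0$ and $\det U_{p_i}=-p_i\neq 0$ for each Kronecker factor) yields the representation for an arbitrary $F_p$, and the necessary conditions come from evaluating the expansion at $(1,\ldots,1)$ and at the points with a single zero coordinate, exactly as in the paper's computation $F_p(1,\ldots,0,\ldots,1)=q_i(1+\theta_i(1-q_i))$. Your explicit subset formula $F_p(x)=\bigl(\prod_{i\in B}q_i\bigr)\sum_{S\subseteq B}\theta_S\prod_{i\in S}p_i$ is just a cleaner writing of the paper's polynomial $g(u)$ restricted to $\QQQ$, so there is no substantive difference.
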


\begin{proof}
Given $u=(u_1,\ldots,u_m) \in \QQQ$ let us define $$g(u)=\left( \prod_{i=1}^m u_i\right)\big(\theta_0+\sum_{j=1}^m\theta_{j}(1-u_j)+\sum_{1\leq j<k\leq m}\theta_{jk}(1-u_j)(1-u_k)+\dots+\theta_{12\dots m}\prod_{i=1}^m(1-u_i)\big)$$ and the row vectors $a_i=(1, \; 1-u_i),\; i=1,\ldots,m$. We can write $g(u)\in\RR$ as
\[
g(u)=\left(\prod_{i=1}^nu_i\right) \left( a_1 \otimes \ldots \otimes a_m \right) \left(
\begin{array}{c}
\theta_0 \\
\theta_m \\
\theta_{m-1} \\
\ldots \\
\theta_{12\ldots m} \\
\end{array}
\right).
\]
Considering all the $u \in \QQQ$ we get the $2^m$-vector $\left(g(u), u\in\QQQ \right) = \Lambda_p U_p \theta$.

We observe that the determinant of $U_{p_i} =\left(
\begin{array}{rr}
1& 1-q_i \\
1 & 0\\
\end{array}%
\right)$ is $\det(U_{p_i})=-p_i \neq 0$. It follows that the determinant of $U_p$, which is $(p_1\cdot \ldots \cdot p_m)^2$,  is also different from zero. Being the determinant of $\Lambda_p\neq0$ we get that the determinant of $\Lambda_p U_p$ is different from zero. It follows that the rank of $\Lambda_p U_p$ is $2^m$ and then any vector $y \in \RR^{2^m}$ and in particular any distribution $F_p$ can be written as $\FF_p = \Lambda_p U_p \theta$.

If $F_p$ is a distribution in $\mathcal{F}(p_1, \dots, p_n)$, the vector parameter $\theta$  must satisfy the following necessary conditions:
\begin{enumerate}
	\item $\theta_0=1$. The condition $F_p(1,\ldots,1)=1$ implies $\theta_0=1$, since  $F_p(1,\ldots,1)=\theta_0$;
	\item $\theta_i=0, i=1,\ldots,m$. The condition $F_p(1,\ldots1,0,1,\ldots,1)=q_i$ implies $\theta_i=0, i=1,\ldots,m$,  since $F_p(1,\ldots1,0,1,\ldots,1)=q_i(1+\theta_i(1-q_i))$.
\end{enumerate}
\end{proof}

\begin{remark}
Under the necessary assumptions $\theta_0=1$ and $\theta_i=0,\, i=1,\ldots,m$, the polynomial function $g(u)$ in Theorem \ref{cumrep} is the restriction of the well-known Farlie-Gumbel-Morgesten copula $C(u)$ to $\QQQ$:
$$C(u):=\left(\prod_{i=1}^m u_i\right)\big(1+\sum_{1\leq j<k\leq n}\theta_{jk}(1-u_j)(1-u_k)+\dots+\theta_{12\dots m}\prod_{i=1}^m(1-u_i)\big), \,\,\, u\in [0,1]^m.$$
Notice that the condition $\theta_0=1$ derives from  $C(1,\ldots,1)=1$ and  the condition $\theta_i=0$ is necessary since a requirement to be a copula is that $C(1,\ldots1,q_i,1,\ldots,1)=q_i$, $i=1,\ldots,m$. Our representation shows that the restriction to $\QQQ$ of the Farlie-Gumbel-Morgesten copula allows us to represent all the binary distributions with given margins, and therefore to model all the possible dependence structures of multivariate Bernoulli distributions.
\end{remark}

As a consequence of Theorem \ref{cumrep} and Equation \ref{eq:F2flarge} any density $f_p\in \mathcal{F}(p_1, \dots, p_m)$  admits the following representation over $\design$
\begin{equation}\label{dens}
\ff_p = D^{\otimes m}  \Lambda_p U_p \theta
\end{equation}
We observe that given $\ff_p \in \mathcal{F}(p_1, \dots, p_m)$ we can write it as in Eq.\eqref{dens}. Vice versa Theorem \ref{cumrep} does not provide any condition on $\theta_{i_1, \ldots, i_k}$ for $k\geq 2$ such that $D^{\otimes m}  \Lambda_p U_p \theta$ represents a density function $\ff_p$ over $\design$.

In the remaining part of this section we will provide a representation of \emph{all} the densities $f_p \in \mathcal{F}(p_1, \dots, p_m)$.

\begin{theorem}\label{radii}
Let $\ff_p \in \FFF(p_1,\dots,p_m)$. It holds that
\begin{equation}\label{prays}
\ff_p=\sum_{i=1}^{n_{\FFF}}\lambda_i\RRR_p^{(i)},
\end{equation}
where $\RRR_p^{(i)}=(R_p^{(i)}(x), x\in\design)\in \FFF(p_1,\ldots,p_m)$, $\lambda_i \geq 0$, $i=1,\dots, n_{\FFF}$ and $\sum_{i=1}^{n_{\FFF}}\lambda_i=1$.
\end{theorem}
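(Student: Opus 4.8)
The plan is to recognise the Fr\'echet class $\FFF(p_1,\dots,p_m)$, viewed as a set of vectors $\ff_p\in\RR^{2^m}$, as a nonempty bounded convex polytope, and then to invoke the representation theorem for polytopes (Minkowski--Weyl / finite-dimensional Krein--Milman): a bounded polyhedron is the convex hull of its finitely many vertices.

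\emph{Step 1: $\FFF(p_1,\dots,p_m)$ is a polyhedron.} A vector $\ff_p=(f_p(x):x\in\design)\in\RR^{2^m}$ is the density vector of some $\XX\in\FFF(p_1,\dots,p_m)$ if and only if it satisfies the $2^m$ linear inequalities $f_p(x)\ge 0$ for $x\in\design$, the normalisation equality $\sum_{x\in\design}f_p(x)=1$, and the $m$ marginal equalities $\sum_{x\in\design:\,x_i=1}f_p(x)=p_i$, $i=1,\dots,m$ (the conditions $\sum_{x:\,x_i=0}f_p(x)=q_i$ follow because $p_i+q_i=1$). Hence $\FFF(p_1,\dots,p_m)$ is the intersection of finitely many closed half-spaces and hyperplanes, i.e.\ a polyhedron.

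\emph{Step 2: it is a nonempty bounded polytope.} Every coordinate of $\ff_p\in\FFF(p_1,\dots,p_m)$ is a probability and hence lies in $[0,1]$, so $\FFF(p_1,\dots,p_m)\subseteq[0,1]^{2^m}$ is bounded, and a bounded polyhedron is a polytope. It is nonempty because it contains the independence density, $f_p(x)=\prod_{i=1}^m p_i^{x_i}q_i^{1-x_i}$, which has the prescribed Bernoulli margins since $0<p_i<1$. By the representation theorem the polytope $\FFF(p_1,\dots,p_m)$ therefore has a finite, nonempty set of vertices (extreme points) $\RRR_p^{(1)},\dots,\RRR_p^{(n_{\FFF})}$ and equals their convex hull. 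Each $\RRR_p^{(i)}$ is in particular a point of the polytope, so $\RRR_p^{(i)}=(R_p^{(i)}(x):x\in\design)\in\FFF(p_1,\dots,p_m)$, and every $\ff_p\in\FFF(p_1,\dots,p_m)$ can be written as $\ff_p=\sum_{i=1}^{n_{\FFF}}\lambda_i\RRR_p^{(i)}$ with $\lambda_i\ge 0$ and $\sum_{i=1}^{n_{\FFF}}\lambda_i=1$, which is \eqref{prays}. Equivalently one may pass to the polyhedral cone generated by $\FFF(p_1,\dots,p_m)$ inside $\RR^{2^m}$; its extreme rays are spanned precisely by these vertices, which explains the name \emph{ray densities}.

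The argument is soft, so I do not expect a serious obstacle; the only points needing care are in Step 1, namely verifying that the Fr\'echet-class membership is captured \emph{exactly} by the listed linear (in)equalities so that nothing beyond a polyhedron is hidden in the definition, and in Step 2 the (trivial) boundedness and nonemptiness. If a more constructive description of the $\RRR_p^{(i)}$ is wanted, one can transport the polytope through the linear isomorphism $\theta\mapsto D^{\otimes m}\Lambda_p U_p\theta$ of Theorem \ref{cumrep} into the affine parameter space $\{\theta:\theta_0=1,\ \theta_i=0,\ i=1,\dots,m\}$ and identify the vertices with explicit parameter vectors $\theta^{(i)}$; this is not needed for the statement, but is presumably the route taken to make the subsequent algorithm effective.
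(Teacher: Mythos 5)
Your proof is correct, and it reaches the conclusion by a slightly more direct route than the paper. You describe the Fr\'echet class, as a subset of $\RR^{2^m}$, by its defining linear constraints ($f\ge 0$, $\sum_{x\in\design}f(x)=1$, $\sum_{x:\,x_i=1}f(x)=p_i$), note that it is a nonempty bounded polyhedron, and invoke the Minkowski--Weyl/vertex representation of polytopes. The paper instead routes the argument through the $\theta$-parametrization of Theorem \ref{cumrep}: the conditions $\theta_i=0$ are rewritten as a homogeneous system $H\ff_p=0$ with $H$ a submatrix of $(D^{\otimes m}\Lambda_p U_p)^{-1}$, the class is realised as the section $\{\sum_x f(x)=1\}$ of the pointed cone $\{f\ge 0:\ Hf=0\}$, and the generators $\RRR_p^{(i)}$ are the extremal rays of that cone normalised to sum one. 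The two generating sets coincide: since the cone lies in the nonnegative orthant, every extremal ray meets the hyperplane $\sum_x f(x)=1$ in exactly one point, and these points are exactly the vertices of your polytope. So the mathematical content is the same finite-generation theorem for polyhedra, cited in the paper via Hemmecke/4ti2 and by you via Minkowski--Weyl. What your version buys is self-containedness --- it needs neither Theorem \ref{cumrep} nor the invertibility of $D^{\otimes m}\Lambda_p U_p$, and your explicit marginal equalities are essentially the description of $H$ (via the odds $\gamma_i$) that the paper only supplies after the proof. What the paper's version buys is the concrete matrix $H$ whose kernel-cone is handed to 4ti2, which is what makes the ray densities effectively computable; your closing remark about transporting the polytope into $\theta$-space points in that same direction but is not needed for the statement.
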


\begin{proof}

Let us define $Y_p = D^{\otimes m}  \Lambda_p U_p$. From Eq.\eqref{dens} it holds that
\begin{equation*}
\ff_p=Y_p\Theta,
\end{equation*}
with the conditions $\theta_0=1$ and $\theta_i=0, \,\,\, i=1,\dots,m$. We can write
\begin{equation*}
\Theta=Y_p^{-1}\ff_p.
\end{equation*}
The conditions $\theta_i=0, \,\,\, i=1,\dots,m$  can be written as
\begin{equation}\label{H}
H\ff_p=0,
\end{equation}
where $H$ is the $m \times 2^m$ sub-matrix of $Y_p^{-1}$ obtained by selecting the rows corresponding to $\theta_i$, $i=1\ldots,m$.

The condition $\theta_0=1 \Leftrightarrow F_p(1,\ldots,1)=1$ is ensured by requiring that $f_p$ is a density, i.e.
\begin{enumerate}
	\item $f_p(x)\geq0$;
	\item $\sum_{\xx} f_p(x)=1$
\end{enumerate}
where $x \in \design$.


All the positive solutions $\ff_p$  of \eqref{H} have the following form:
\begin{equation*}\label{dolR}
\ff_p=\sum_{i=1}^{n_{\FFF}}\tilde{\lambda}_i\tilde{R}^{(i)}_p,  \,\,\, \tilde{\lambda}_i \geq 0,
\end{equation*}
where  $\tilde{R}^{(i)}_p=(\tilde{R}^{(i)}_{p,j}, j=1,\ldots, 2^m)\in \RR^{2^m}$, $\tilde{R}^{(i)}_{p,j} \geq 0$ and $H \tilde{R}^{(i)}_p=0$, $i=1,\ldots,n_{\FFF}$ are the \emph{extremal rays} of the cone defined by $H f_p=0$ (\cite{4ti2} and \cite{hemmecke2002computation}).

By dividing $\tilde{R}^{(i)}_p$ by the sum of its elements $\tilde{R}_{p,+}^{(i)}=\sum_{j=1}^{2^m}\tilde{R}^{(i)}_{p,j}$ we can write
\begin{equation*}
\ff_p=\sum_{i=1}^{n_{\FFF}}\lambda_i\RRR_p^{(i)},
\end{equation*}
where $\lambda_i={\tilde{\lambda}_i}{\tilde{R}^{(i)}_{p,+}}$  and $\RRR_p^{(i)}=\frac{\tilde{R}^{(i)}_p}{\tilde{R}_{p,+}^{(i)}}, \,\,\, i=1,\dots, n_{\FFF}$. It follows that  $\sum_{j=1}^{2^m}  R_{p,j}^{(i)}=1$ and that the \emph{ray density} defined as $R^{(i)}_p(x):=R_{p,j}^{(i)}$ being $x$ the $j$-th element of $\design$ belongs to $\FFF(p_1, \ldots, p_m)$, $i=1,\ldots,n_{\FFF}$.

Finally the condition   $\sum_{\xx} f_p(\xx)=1$ implies  $\sum_{i=1}^{n_{\FFF}}\lambda_i(\sum_{j=1}^{2^m}  R_{p,j}^{(i)})=\sum_{i=1}^{n_{\FFF}}\lambda_i=1$.
Then we have $\lambda_i \geq 0, i=1,\ldots,m$ and $\sum_{i=1}^{n_{\FFF}}\lambda_i=1$ and the assert is proved.
\end{proof}

Notice that Theorem \ref{radii} makes extremely easy to generate any density $\ff_p$ of the Fr\'echet class $\FFF(p_1,\dots,p_m)$. It is enough to take a positive vector $\lambda=(\lambda_1,\ldots, \lambda_{n_{\FFF}})$, such that $\sum_{i=1}^{n_{\FFF}}\lambda_i=1$,  and build $f_p=\sum_{i=1}^{n_{\FFF}}\lambda_i\RRR_p^{(i)}$.

The constraints $\expval[X_i]=p_i, \; i=1,\ldots,m$ allow us to obtain an interesting intepretation of the matrix $H$ of \eqref{H}.  We have $\expval[X_i] = \sum_{(x_1,\ldots,x_m) \in \design} x_i f_p(x_1,\ldots,x_m)$. It follows that
\begin{eqnarray*}
x_i^T f_p=p_i \\
(1-x_i)^T f_p =q_i
\end{eqnarray*}
where $x_i$ is the vector which contains the $i$-th element of $x \in \design$, $i=1,\ldots,m$. If we consider the odds of the event $X_i=1$, $\gamma_i=p_i/q_i$ we have $\gamma_i q_i - p_i=0$. We can write
\[
(\gamma_i (1-x_i)^T - x_i^T) f_p = 0.
\]
Then $H$ is simply the $m \times 2^m$ matrix whose rows, up to a non-influential multiplicative constant, are $(\gamma_i (1-x_i)^T - x_i^T)$, $i=1,\ldots,m$.

Using Theorem \ref{radii} we represent each Fr\'echet class $\FFF(p_1,\ldots,p_m)$ as the convex hull of the \emph{ray densities}. We observe that the \emph{ray densities} depend only on the marginal distributions $F_1, \dots, F_m$.

Building the \emph{ray} matrix $R_p$
\[
R_p=\left(
\begin{array}{ccc}
R_{p,1}^{(1)} & \ldots & R_{p,1}^{(n_{\FFF})} \\
& \ldots & \\
R_{p,2^m}^{(1)} & \ldots & R_{p,2^m}^{(n_{\FFF})}
\end{array}
\right)
\]
whose columns are the ray densities $\RRR_p^{(i)}, i=1,\ldots, n_{\FFF}$ we write Eq.\eqref{prays} simply as
\[
\ff_p=R_p \lambda
\]
with $\lambda=(\lambda_1,\ldots,\lambda_{n_{\FFF}}), \lambda_i \geq 0$ and $\sum_{i=1}^{n_{\FFF}}\lambda_i=1$.

In practical applications the rays $\tilde{R}^{(i)}_p$ and therefore the \emph{ray densities} $R^{(i)}_p$  can be found using the software 4ti2, \cite{4ti2}. In Section \ref{sec:examples} we will use SAS and 4ti2 to show some numerical examples.

In the next sections we will see  that the representation of $\ff_p$ as in Theorem \ref{radii} plays a key role in determining the densities with given moments.

\subsection{Moments of multivariate Bernoulli variables}
We observe that, given the Bernoulli variable $X \sim B(\tau), 0<\tau<1$ with density function $f_\tau$  we can compute the moments $\expval [X^\alpha], \alpha \in \{0,1\}$ as
\[
\expval [X^\alpha]=
\left(
\begin{array}{c}
\expval[1] \\
\expval[X] 	
\end{array}
\right )
= M
\left(
\begin{array}{c}
f_\tau(0) \\
f_\tau(1) 	
\end{array}
\right)
\]
where $M=\left(
\begin{array}{rr}
1 & 1 \\
0 & 1
\end{array}
\right )$.

It follows that given $\XX=(X_1, \dots, X_m) \in \FFF(p_1, \dots, p_m)$ with multivariate joint density $f_p$, we can compute the vector of its moments $\expval [\XX^\alpha] \equiv \expval [X_1^{\alpha_1} \cdot \ldots \cdot X_m^{\alpha_m}], \alpha=(\alpha_1,\ldots,\alpha_m) \in \design$ as
\[
\expval[\XX^\alpha]= M^{\otimes m} \ff_p.
\]
We also observe that the correlation $\rho_{ij}$ between two Bernoulli variables $X_i \sim B(p_i)$ and $X_j \sim B(p_j)$ is related to the second-order moment $\expval[X_i X_j]$ as follows
\begin{equation}\label{eq:rho_e12}
\expval[X_iX_j]=\rho_{ij} \sqrt{p_iq_ip_jq_j}+p_ip_j.
\end{equation}

\subsection{Second-order moments of multivariate Bernoulli variables with given margins} \label{sec:m2gm1}
From Theorem \ref{radii} we get
\[
\expval [\XX^\alpha] =  M^{\otimes m} \ff_p = M^{\otimes m} R_p \lambda.
\]
In particular for the second-order moments $\mu_2= \expval[\XX^\alpha: \|\alpha\|_0=2]$, where $\|\alpha\|_0=\sum_{i=1}^m \alpha_i$ we get the following result, which is crucial for the solution of the problem of simulating multivariate binary distributions with a given correlation matrix.
\begin{proposition} \label{pr:mu2}
It holds that
\begin{equation}\label{eq:rho}
\mu_2=A_{2p} \lambda
\end{equation}
where  $A_{2p}=\left(M^{\otimes m}\right)_2 R_p$ and $\left(M^{\otimes m}\right)_2$ is the sub-matrix of $M^{\otimes m}$ obtained by selecting the rows corresponding to the second-order moments, $R_p$ is the \emph{ray matrix} and $\lambda=(\lambda_1,\ldots,\lambda_{n_{\FFF}})$, $\lambda_i \geq 0, i=1,\ldots m$ and $\sum_{i=1}^{n_{\FFF}}\lambda_i=1$.
\end{proposition}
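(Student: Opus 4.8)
The plan is to derive Equation \eqref{eq:rho} directly from the two facts already established: the ray representation $\ff_p = R_p\lambda$ from Theorem \ref{radii}, and the moment formula $\expval[\XX^\alpha] = M^{\otimes m}\ff_p$ from the preceding subsection. First I would write the full vector of moments as $\expval[\XX^\alpha] = M^{\otimes m}\ff_p = M^{\otimes m}(R_p\lambda) = (M^{\otimes m}R_p)\lambda$, using associativity of matrix multiplication. This already gives a linear relation between the entire moment vector and $\lambda$; the rest is bookkeeping.

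Next I would restrict attention to the rows of this identity indexed by multi-indices $\alpha$ with $\|\alpha\|_0 = 2$, i.e. the second-order moments $\expval[X_iX_j]$. Selecting those rows on the left produces $\mu_2$ by definition; selecting the same rows on the right amounts to replacing $M^{\otimes m}$ by its submatrix $\left(M^{\otimes m}\right)_2$ consisting of exactly those rows, because row selection commutes with right multiplication by $R_p$ and by $\lambda$. Hence $\mu_2 = \left(M^{\otimes m}\right)_2 R_p\,\lambda$, and setting $A_{2p} := \left(M^{\otimes m}\right)_2 R_p$ gives \eqref{eq:rho}. The constraints on $\lambda$ (nonnegativity and summing to one) are inherited verbatim from Theorem \ref{radii} and require no further argument.

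The only point needing a word of care — and what I would call the ``main obstacle,'' though it is mild — is making the row-selection step precise: one must check that for any matrices $B$ and $C$ and any index set $I$ of rows, $(BC)_I = B_I\, C$, so that extracting second-order-moment rows from $M^{\otimes m}R_p$ is the same as first extracting those rows from $M^{\otimes m}$ and then multiplying by $R_p$. This is immediate from the definition of matrix product (the $i$-th row of $BC$ depends only on the $i$-th row of $B$), but it is the hinge that lets us package everything into the single matrix $A_{2p}$. I would also remark, for completeness, that the ordering of $\design$ by the reverse-lexicographical criterion fixes unambiguously which rows of $M^{\otimes m}$ carry the second-order moments, so $\left(M^{\otimes m}\right)_2$ is well defined. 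With these observations the proposition follows in a couple of lines.
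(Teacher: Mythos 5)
Your argument is correct and is essentially the derivation the paper itself gives just before the proposition: it writes $\expval[\XX^\alpha]=M^{\otimes m}\ff_p=M^{\otimes m}R_p\lambda$ and then restricts to the rows with $\|\alpha\|_0=2$. Your added remarks on row selection commuting with right multiplication and on the reverse-lexicographical ordering are harmless elaborations of the same one-line computation.
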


It follows that the target second-order moments are compatible with the means if they belong to the convex hull
generated by the points which are the columns of the $A_{2p}=\left(M^{\otimes m}\right)_2 R_p$ matrix.
As a direct consequence of Proposition \ref{pr:mu2} we also get the univariate bounds for the second-order moments and the correlations.

\begin{proposition} \label{pr:bounds}
For each $\alpha$, $\|\alpha\|_0=2$, the second-order moment $\mu_2^{(\alpha)}$ must satisfy the following bounds
\begin{equation}
\min A_{2p}^{(\alpha)} \leq \mu_2^{(\alpha)} \leq \max A_{2p}^{(\alpha)}
\label{eq:pbounds}
\end{equation}
and the correlations $\rho_{S(\alpha)}$ must satisfy the following bounds
\begin{equation} \label{eq:bounds4cor}
\frac{\min A_{2p}^{(\alpha)}-p_ip_j}{\sqrt{p_iq_ip_jq_j}} \leq \rho_{ij} \leq \frac{\max A_{2p}^{(\alpha)}-p_ip_j}{\sqrt{p_iq_ip_jq_j}}
\end{equation}
where $A_{2p}^{(\alpha)}$ is the row of the matrix $A_{2p}$ such that $\mu_2^{(\alpha)}=A_{2p}^{(\alpha)} \lambda$ and
$\{i,j\}=\{k: \alpha_k=1\}$.
\end{proposition}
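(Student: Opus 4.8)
The plan is to derive Proposition \ref{pr:bounds} as an immediate corollary of Proposition \ref{pr:mu2}, exploiting the fact that $\lambda$ ranges over the probability simplex. First I would recall from Proposition \ref{pr:mu2} that every attainable vector of second-order moments satisfies $\mu_2 = A_{2p}\lambda$ with $\lambda_i \geq 0$ and $\sum_i \lambda_i = 1$, and conversely every such $\lambda$ corresponds (via $\ff_p = R_p\lambda$) to a genuine density in $\FFF(p_1,\dots,p_m)$, so the set of attainable $\mu_2$ is exactly the convex hull of the columns of $A_{2p}$.

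Fixing a multi-index $\alpha$ with $\|\alpha\|_0 = 2$, I would then look at the single scalar coordinate $\mu_2^{(\alpha)} = A_{2p}^{(\alpha)}\lambda$, where $A_{2p}^{(\alpha)}$ is the corresponding row of $A_{2p}$. Since $\lambda$ lies in the simplex, $\mu_2^{(\alpha)}$ is a convex combination of the entries of the row $A_{2p}^{(\alpha)}$, hence it is bounded below by the smallest entry $\min A_{2p}^{(\alpha)}$ and above by the largest entry $\max A_{2p}^{(\alpha)}$; moreover both extremes are actually achieved by taking $\lambda$ to be the appropriate vertex of the simplex, so the bounds in \eqref{eq:pbounds} are tight. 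This is the whole content of the first display.

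For the correlation bounds \eqref{eq:bounds4cor}, I would invoke Equation \eqref{eq:rho_e12}, which gives the affine relationship $\expval[X_iX_j] = \rho_{ij}\sqrt{p_iq_ip_jq_j} + p_ip_j$ between the correlation $\rho_{ij}$ and the second-order moment, where $\{i,j\} = \{k : \alpha_k = 1\}$. Since $\sqrt{p_iq_ip_jq_j} > 0$ (because $0 < p_i < 1$ for all $i$), this map is strictly increasing in $\rho_{ij}$, so the interval constraint on $\mu_2^{(\alpha)}$ translates verbatim into the interval constraint on $\rho_{ij}$ by solving the affine relation for $\rho_{ij}$. Substituting $\mu_2^{(\alpha)} = \min A_{2p}^{(\alpha)}$ and $\mu_2^{(\alpha)} = \max A_{2p}^{(\alpha)}$ and dividing through yields exactly \eqref{eq:bounds4cor}.

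There is essentially no obstacle here: the only thing to be slightly careful about is the direction of monotonicity of the affine transformation in \eqref{eq:rho_e12} and the strict positivity of $\sqrt{p_iq_ip_jq_j}$, which guarantees the transformation preserves (rather than reverses) the order of the inequalities; the assumption $0 < p_i < 1$ in the definition of the Fr\'echet class covers this. The rest is the elementary fact that a linear functional on the standard simplex attains its extrema at vertices.
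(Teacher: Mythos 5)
Your proposal is correct and follows essentially the same route as the paper: both arguments observe that $\mu_2^{(\alpha)} = A_{2p}^{(\alpha)}\lambda$ is a convex combination of the entries of that row (so its extrema are attained at vertices $e_i$ of the simplex), and then transfer the bounds to $\rho_{ij}$ via the affine relation \eqref{eq:rho_e12}. Your explicit remark that $\sqrt{p_iq_ip_jq_j}>0$ preserves the direction of the inequalities is a small but welcome point the paper leaves implicit.
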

\begin{proof}
From Proposition \ref{pr:mu2} using the the proper row of $A_{2p}$ we get
\[
\mu_2^{(\alpha)}= A_{2p}^{(\alpha)} \lambda.
\]
To prove \eqref{eq:pbounds} it is enough to observe that
\begin{enumerate}
	\item being $\lambda_i \geq0$ and $\sum_{i=1}^{n_{\FFF}}\lambda_i=1$ it follows that the minimum (maximum) value of $\mu_2^{(\alpha)}$ will be obtained choosing $\lambda$ equal to one of the $e_i$'s, where $e_i \in \{0,1\}^{n_{\FFF}}$ is the binary vector with all the elements equal to zero apart from the $i$-th which is equal to one, $i=1,\ldots,n_{\FFF}$;
	\item the product $A_{2p}^{(\alpha)} e_i$ gives the $i$-th element of $A_{2p}^{(\alpha)}$.
	\end{enumerate}
To prove \eqref{eq:bounds4cor} we simply observe that using equation \eqref{eq:rho_e12} the bounds in \eqref{eq:pbounds} can be transformed to those suitable for correlations.
\end{proof}


Now we solve the problem of constructing a multivariate Bernoulli density $f_p \in \mathcal{F}(p_1, \dots, p_m)$ with given correlation matrix ${\rho}=(\rho_{ij})_{ i,j=1,\dots,m}$. Using Equation \eqref{eq:rho_e12} we transform the desired correlations $\rho_{ij}$ into the corresponding desired second-order moments $\expval[X_iX_j], i,j=1,\ldots,m, i<j$. In this way the density $\ff_p$ with means $p_1,\ldots,p_m$ and correlation matrix $\rho$ can be built as $R_p \lambda$, where $\lambda=(\lambda_1,\ldots,\lambda_{n_\mathcal{F}}), \lambda_i \geq 0, \sum_{i=1}^{n_\mathcal{F}} \lambda_i=1$ is a solution, if it exists, of the system of equations \eqref{eq:rho}.

The space of solutions $\lambda$ of the system \eqref{eq:rho} defines the set of distributions in the Fr\'echet class with correlation matrix $\rho$. The choice of a particular solution does not modify the distributions of  the sample means and of the sample second-order moments, which depend only on $p_1,\ldots,p_m$ and $\rho$ respectively.
To explain this point let us consider a random sample $\{(X_{k1},\ldots,X_{km}), \, k=1,\ldots,N\}$ extracted from a randomly selected $m$-dimensional Bernoulli variable belonging to the  Fr\'echet class $\FFF(p_1, \ldots, p_n)$ and with given second-order moments $\mu_{ij}:=E[X_iX_j], \, i,j=1,\ldots,n$.
The sample means $\overline{X}_i, \, i=1,\ldots, m$ are $\frac{1}{N} \Binomial(N,p_i)$ and the sample second-order moments  $\overline{X_iX_j}:=\sum_{k=1}^N\frac{X_{ki}X_{kj}}{N}, \, i,j=1,\ldots,n, i<j$ are $\frac{1}{N} \Binomial(N,\mu_{ij})$.

In general different distributions which belong to the same Fr\'echet class and which have the same correlation matrix $\rho$ (or equivalently the same vector of second-order moments $\mu_2$), will have different $k$-order moments, with $k\geq 3$. This methodology offers the opportunity to choose the \emph{best} distribution according to a certain criterion. For example, as the moments of multivariate Bernoulli are always positive, it could be of interest to find one of the distributions with the smallest sum of all the moments with order greater than $2$. This problem can be efficiently solved using linear programming techniques (\cite{berkelaar2004lpsolve}). It can be simply stated as
\[
\min_{f \in \mathbb{F}_m} (1^{T} (M^{\otimes m})_{3\ldots m} f)
\]
subject to
\[
\begin{cases}
H f=0 \\
(M^{\otimes m})_{2} f= \mu_2
\end{cases}
\]
where $1$ is the vector with all the elements equal to $1$ and $(M^{\otimes m})_{3\ldots m}$ is the sub-matrix of $M^{\otimes m}$ obtained by selecting the rows corresponding to the $k$-moments, with $k \geq 3$.

As we already mentioned, from a geometrical point of view a solution of the system of equations \eqref{eq:rho} exists if and only if a point whose coordinates are the desired second-order moments belongs to the convex hull generated by the points which are the columns of the $A_{2p}=\left(M^{\otimes m}\right)_2 R_p$ matrix. If the margins and the correlation matrix are not compatible, the system \eqref{eq:rho} does not have any solution. In this case it is possible to search for a feasible $\rho^*$ which is the correlation matrix closest to the desired $\rho$, according to a chosen distance.

Finally it is worth noting that the method can be applied to the moments of order greater than $2$ or to any selection of moments by simply replacing the $\left(M^{\otimes m}\right)_2$ matrix with the proper one.

\subsection{Margins of multivariate Bernoulli variables with given second-order moments} \label{sec:m1gm2}
In Section \ref{sec:m2gm1} we studied second-order moments of multivariate Bernoulli variables with given margins.   The methodology can be easily generalised to solve the problem of studying $h$-order moments of multivariate Bernoulli variables with given $k$-order moments, $h,k \in\{1,\ldots,m\}, \; h\neq k$. We show this point by studying the $h=1, k=2$ case, i.e. studying margins of multivariate Bernoulli variables $f_{\mu_2}$ with given $2$-order moments $\mu_2=(\mu_{ij}: i,j=1,\ldots,m, \; i < j)$.

We observe that $\expval[X_i X_j]=\sum_{(x_1,\ldots,x_m) \in \design} x_ix_j f_{\mu_2}(x_1,\ldots,x_m)$, that is

\begin{eqnarray*}
x_{ij}^T f_{\mu_2}=\mu_{ij} \\
(1-x_{ij})^T f_{\mu_2} =1-\mu_{ij}
\end{eqnarray*}

where $x_{ij}$ is the vector which contains the product $x_i x_j$ of the $i$-th and the $j$-th element of $x \in \design$. If we consider the odds of the event $X_iX_j=1$, $\gamma_{ij}=\mu_{ij}/(1-\mu_{ij})$, we have $\gamma_{ij} (1-\mu_{ij}) - \mu_{ij}=0$ that is
\[
(\gamma_{ij} (1-x_{ij})^T - x_{ij}^T) f_{\mu_2} = 0.
\]
Building the matrix $H_2$ whose rows are $(\gamma_{ij} (1-x_{ij})^T - x_{ij}^T)$, all the densities $f_{\mu_2}$ must satisty the system of equations $H_2 f_{\mu_2}=0$. The following proposition is the equivalent of Theorem \ref{radii}, Proposition \ref{pr:mu2} and Proposition \ref{pr:bounds} for the case under study.

\begin{proposition}
Let $\ff_{\mu_2}$ a multivariate Bernoulli density with second-order moments $\mu_2=(\mu_{ij}: i,j=1,\ldots,m, \; i < j)$:
\begin{enumerate}
\item all the densities $f_{\mu_2}$ can be written as
\begin{equation}\label{rays}
\ff_{\mu_2}=\sum_{i=1}^{n_{\FFF}}\lambda_i\RRR_{\mu_2}^{(i)},
\end{equation}
where $\RRR_{\mu_2}^{(i)}=(R_{\mu_2}^{(i)}(x), x\in\design)$ $i=1,\dots, n_{\FFF}$ are multivariate Bernoulli densities with second-order moments $\mu_2$, $\lambda_i \geq 0, i=1,\ldots,m$ and $\sum_{i=1}^{n_{\FFF}}\lambda_i=1$.
\item The vector $p=(p_1,\ldots,p_m)$ is
\begin{equation}
p=A_{1\mu_2} \lambda
\end{equation}
where  $A_{1\mu_2} =\left(M^{\otimes m}\right)_1 R_{\mu_2}$ and $\left(M^{\otimes m}\right)_1$ is the sub-matrix of $M^{\otimes m}$ obtained by selecting the rows corresponding to the first-order moments, $R_{\mu_2} $ is the \emph{ray matrix} and $\lambda=(\lambda_1,\ldots,\lambda_{n_{\FFF}})$, $\lambda_i \geq 0, i=1,\ldots m$ and $\sum_{i=1}^{n_{\FFF}}\lambda_i=1$.
\item For each $\alpha$, $\|\alpha\|_0=1$, the first-order moment $\mu_1^{(\alpha)} \equiv p_i$ must satisfy the following bounds
\begin{equation}
\min A_{1\mu_2}^{(\alpha)} \leq p_i \leq \max A_{1\mu_2}^{(\alpha)}
\label{eq:bounds}
\end{equation}
where $A_{1\mu_2}^{(\alpha)}$ is the row of the matrix $A_{1\mu_2} $ such that $p_i=A_{1\mu_2}^{(\alpha)} \lambda$ and
$\{i\}=\{k:\alpha_k=1\}$.
\end{enumerate}
\end{proposition}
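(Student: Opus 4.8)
The plan is to repeat, with first- and second-order moments interchanged and with the matrix $H$ of Theorem~\ref{radii} replaced by the matrix $H_2$ built just above, the three arguments that prove Theorem~\ref{radii}, Proposition~\ref{pr:mu2} and Proposition~\ref{pr:bounds}. For item~1 I would first record the basic equivalence: a vector $\ff\in\RR^{2^m}$ is the value vector over $\design$ of a multivariate Bernoulli density with second-order moments $\mu_2$ if and only if $\ff\geq 0$, $1^T\ff=1$ and $H_2\ff=0$. The last equivalence is exactly the computation made before the statement — the $\{i,j\}$-th row of $H_2$ applied to $\ff$ returns $\gamma_{ij}\big(1-\expval[X_iX_j]\big)-\expval[X_iX_j]$, which vanishes precisely when $\expval[X_iX_j]=\mu_{ij}$, since $\gamma_{ij}=\mu_{ij}/(1-\mu_{ij})$. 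Now $\{\ff\geq0:H_2\ff=0\}$ is a pointed polyhedral cone, so, exactly as in the proof of Theorem~\ref{radii}, it is generated by finitely many extremal rays $\tilde R^{(1)}_{\mu_2},\dots,\tilde R^{(n_{\FFF})}_{\mu_2}$ (obtainable with 4ti2, \cite{4ti2}), and every $\ff\geq0$ with $H_2\ff=0$ equals $\sum_i\tilde\lambda_i\tilde R^{(i)}_{\mu_2}$ with $\tilde\lambda_i\geq0$. Dividing each $\tilde R^{(i)}_{\mu_2}$ by the sum $\tilde R^{(i)}_{\mu_2,+}$ of its entries gives vectors $\RRR^{(i)}_{\mu_2}$ summing to one; since $H_2$ is homogeneous we still have $H_2\RRR^{(i)}_{\mu_2}=0$, so each $\RRR^{(i)}_{\mu_2}$ is itself a density with second-order moments $\mu_2$. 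Putting $\lambda_i=\tilde\lambda_i\tilde R^{(i)}_{\mu_2,+}$ yields $\ff_{\mu_2}=\sum_i\lambda_i\RRR^{(i)}_{\mu_2}$ with $\lambda_i\geq0$, and the constraint $1^T\ff_{\mu_2}=1$ forces $\sum_i\lambda_i=1$; this is \eqref{rays}.

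For item~2 I would use the identity $\expval[\XX^\alpha]=M^{\otimes m}\ff$ recalled earlier: selecting the rows of $M^{\otimes m}$ corresponding to the first-order moments and substituting $\ff_{\mu_2}=R_{\mu_2}\lambda$ from item~1 gives $p=(M^{\otimes m})_1\ff_{\mu_2}=(M^{\otimes m})_1 R_{\mu_2}\lambda=A_{1\mu_2}\lambda$. For item~3 I would argue exactly as in Proposition~\ref{pr:bounds}: for $\|\alpha\|_0=1$, the scalar $p_i=A_{1\mu_2}^{(\alpha)}\lambda$ is a convex combination of the entries of the row $A_{1\mu_2}^{(\alpha)}$ because $\lambda\geq0$ and $\sum_i\lambda_i=1$, hence it lies between the smallest and the largest of those entries, the extremes being attained when $\lambda$ is a standard basis vector $e_i$.

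The only genuine obstacle is item~1, and it is the same one already handled in Theorem~\ref{radii}: one must invoke the finite generation of the polyhedral cone $\{\ff\geq0:H_2\ff=0\}$ by its extremal rays, and then check that normalising those rays keeps them inside the class — which works precisely because the moment constraints $H_2\ff=0$ are homogeneous while only the normalisation $1^T\ff=1$ is not, so the normalisation can be pushed onto the coefficients $\lambda_i$. Everything else is the routine linear algebra of the given-margins case, read with the roles of first- and second-order moments swapped; in particular, as there, $n_{\FFF}$ denotes the number of extremal rays of the relevant cone, and the statement presupposes that the prescribed $\mu_2$ is feasible, so that this cone is non-trivial.
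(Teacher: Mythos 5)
Your proposal is correct and follows exactly the route the paper intends: the paper gives no separate proof for this proposition, merely stating that it is the analogue of Theorem \ref{radii} and Propositions \ref{pr:mu2}--\ref{pr:bounds} with $H$ replaced by $H_2$, and your argument carries out precisely that adaptation (extremal rays of the cone $\{\ff\geq 0: H_2\ff=0\}$, normalisation pushed onto the $\lambda_i$, then the moment map $M^{\otimes m}$ and the convex-combination bound). Your added remarks --- that the row identity for $H_2$ requires $1^T\ff=1$, and that feasibility of $\mu_2$ is presupposed --- are correct and make the argument slightly more explicit than the paper's.
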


\section{Bivariate Bernoulli density with given margins} \label{bivariate}
In this section we consider bivariate distributions, i.e. the class $\FFF(p_1,p_2)$ of $2$-dimensional random variables $(X_1,X_2)$ which have Bernoulli marginal distributions $F_i\sim B(p_i), i=1,2$. In the bivariate case
two key distributions are $F_L$ and $F_u$, the lower and upper Fr\'echet bound of $\FFF(p_1,p_2)$ respectively:
\begin{eqnarray}
F_L(x)=max\{F_1(x_1)+F_2(x_2)-1)  \label{eq:FL} \\
F_U(x)=min\{F_1(x_1), F_2(x_2)\} \label{eq:FU}
\end{eqnarray}
where $x=(x_1,x_2)\in\{0,1\}^2$.

For any  $F_p\in \FFF(p_1, p_2)$ it holds that
\begin{equation} \label{eq:Fbounds}
F_L(x)\leq F_p(x)\leq F_U(x), \;  x\in\{0,1\}^2.
\end{equation}
For an overview of Fr\'echet classes and their bounds see \cite{dall2012advances}.

We now analyse Theorem \ref{radii} in the bivariate case. The number of rays is independent of the Fr\'echet class $\FFF(p_1,p_2)$. We have two ray densities, which are the lower and upper Fr\'echet bound of each class.

\begin{proposition} \label{pr:biv}
Let $\ff\in \mathcal{F}(p_1, p_2)$, then
\begin{equation*}
\ff_p=\lambda\ff_L+(1-\lambda)\ff_U,\,\,\ \lambda\in[0,1],
\end{equation*}
where   $f_L$ and $f_U$ are the discrete densities corresponding to $F_L$ and $F_U$, respectively.
\end{proposition}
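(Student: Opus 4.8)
The plan is to apply Theorem~\ref{radii} with $m=2$ and identify the extremal rays of the cone $\{f \in \RR^{4} : H f = 0,\ f \geq 0\}$ explicitly. Recall from the discussion following Theorem~\ref{radii} that the rows of $H$ are (up to scalar) $\gamma_i(1-x_i)^T - x_i^T$ for $i=1,2$, where $x_i$ is the vector of $i$-th coordinates of the points of $\design$ in reverse-lexicographical order and $\gamma_i = p_i/q_i$. So the system $Hf=0$ is exactly the pair of marginal constraints $\expval[X_1]=p_1$, $\expval[X_2]=p_2$ (together with the implicit normalization $\sum f = 1$ coming from the ray-density rescaling). The first step is therefore to write these two linear equations out over the four atoms $(0,0),(0,1),(1,0),(1,1)$ of $\design$.

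Next I would determine the extremal rays of this two-equation (plus nonnegativity) cone. Since we have a $4$-dimensional nonnegative orthant cut by $2$ independent homogeneous linear equations, the solution cone is $2$-dimensional and generically has exactly two extremal rays; each extremal ray is obtained by forcing two of the four coordinates to vanish and solving for the other two. Checking the $\binom{4}{2}=6$ candidate supports, one finds that the feasible ones yield precisely two rays, and after normalizing so the entries sum to $1$ these are exactly the densities $\ff_L$ and $\ff_U$ associated with the Fréchet bounds $F_L$ and $F_U$ in~\eqref{eq:FL}–\eqref{eq:FU}. Concretely, $f_U$ puts mass $\min(q_1,q_2)$ on $(0,0)$ and $\max(p_1,p_2)-\min(p_1,p_2)$ on the appropriate mixed atom and $\min(p_1,p_2)$ on $(1,1)$ (the comonotone coupling), while $f_L$ is the countermonotone coupling with mass $\max(p_1+p_2-1,0)$ on $(1,1)$; one verifies directly that each satisfies both marginal equations and is nonnegative, and that any $f$ in the cone is a nonnegative combination of the two. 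That $n_{\FFF}=2$ independently of $(p_1,p_2)$ then follows, as claimed in the text.

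Having identified the two ray densities, the statement of Proposition~\ref{pr:biv} is immediate from Theorem~\ref{radii}: writing $\ff_p = \sum_{i=1}^{2}\lambda_i \RRR_p^{(i)}$ with $\lambda_i \geq 0$ and $\lambda_1+\lambda_2=1$, and setting $\lambda := \lambda_1$, $\RRR_p^{(1)} = \ff_L$, $\RRR_p^{(2)} = \ff_U$, gives $\ff_p = \lambda \ff_L + (1-\lambda)\ff_U$ with $\lambda \in [0,1]$. It remains only to justify that the two ray densities are indeed $f_L$ and $f_U$ and not some other pair; this is where one can invoke~\eqref{eq:Fbounds}, since the convex hull of $\{F_L, F_U\}$ must contain every $F_p \in \FFF(p_1,p_2)$, and in the bivariate case the inequality chain $F_L \leq F_p \leq F_U$ pins down the extreme points of the (one-parameter) family.

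\textbf{Main obstacle.} The only real work is the case analysis showing that exactly two of the six two-element supports give feasible (nonnegative) solutions and that these coincide with the Fréchet-bound densities — in particular handling the dependence of $\min$/$\max$ on whether $p_1 \lessgtr p_2$ and whether $p_1+p_2 \lessgtr 1$, so that the mixed atom carrying the slack is identified correctly in each regime. This is elementary but requires care to state cleanly; alternatively one can bypass the enumeration entirely by directly checking that $f_L, f_U \in \FFF(p_1,p_2)$, that they span the kernel of $H$ within the orthant, and that neither is a positive combination of densities in the class other than scalar multiples of itself, i.e.\ that they are genuinely extremal.
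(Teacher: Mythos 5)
Your route is genuinely different from the paper's: you specialize the cone/extremal-ray machinery of Theorem~\ref{radii} to $m=2$ and try to identify the rays explicitly, whereas the paper's proof never touches the cone at all — it observes that $F$ and $f$ coincide at $(0,0)$, so the Fr\'echet inequality \eqref{eq:Fbounds} gives $f_L(0,0)\le f_p(0,0)\le f_U(0,0)$, defines $\lambda=\frac{f_p(0,0)-f_U(0,0)}{f_L(0,0)-f_U(0,0)}$, and then propagates the convex combination to the other three atoms via the affine relations $f(0,1)=q_1-f(0,0)$, $f(1,0)=q_2-f(0,0)$, $f(1,1)=1-q_1-q_2+f(0,0)$. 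That direct argument is shorter and is really the content of your closing remark: a bivariate density with fixed margins is an affine function of the single number $f(0,0)$, so the one scalar inequality does all the work. (Note that the pointwise bound $F_L\le F_p\le F_U$ alone does \emph{not} imply membership in the convex hull of $\{F_L,F_U\}$; it is precisely this one-parameter affine structure that closes that gap, so your final paragraph as stated is circular without it.)

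The concrete error in your execution is the support-size count for the extremal rays. The cone $\{f\in\RR^4: Hf=0,\ f\ge 0\}$ has $H$ of rank $2$, and the vertices of its normalized cross-section $\{f\ge 0,\ Hf=0,\ \mathbf{1}^Tf=1\}$ are basic feasible solutions of a system of \emph{three} equations, hence generically have \emph{three} nonzero coordinates, not two; you dropped the normalization when counting. Indeed, for $p_1\ne p_2$ and $p_1+p_2\ne 1$ every two-element support is infeasible (e.g.\ support $\{(0,0),(1,1)\}$ forces $p_1=p_2$ and support $\{(0,1),(1,0)\}$ forces $p_1+p_2=1$), so your enumeration over the $\binom{4}{2}=6$ candidate supports returns nothing — and it contradicts your own correct description of $f_U$ as carrying mass on three atoms. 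The conclusion ($n_{\FFF}=2$ with rays normalizing to $\ff_L$ and $\ff_U$) is right, but the enumeration must be carried out over supports of size at most three (equivalently, over the acyclic supports of the $2\times 2$ transportation polytope), or replaced by the paper's direct affine argument.
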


\begin{proof}
We observe that in $x=(0,0)$ the distribution function and the density function take the same value. Then using
\eqref{eq:Fbounds} we can write
\begin{equation} \label{eq:b00}
f_L(0,0) \leq f_p(0,0) \leq f_U(0,0) .
\end{equation}

It follows that $f_p(0,0)=\lambda f_L(0,0) + (1-\lambda) f_U(0,0)$ with $\lambda=\frac{f_p(0,0)-f_U(0,0)}{f_L(0,0)-f_U(0,0)}$. It holds that $0 \leq \lambda \leq 1$.

Now we observe that for any density function $f \in \FFF(p_1,p_2)$ we have $f(0,1)=q_1 - f(0,0)$. Then using \eqref{eq:b00} we can write
\[
q_1-f_L(0,0) \geq q_1-f_p(0,0) \geq q_1-f_U(0,0)
\]
that is
\[
f_U(1,0) \leq f_p(1,0) \leq f_L(1,0).
\]
We can write $f_p(1,0)=\lambda_1 f_L(1,0) + (1-\lambda_1) f_U(1,0)$. It is easy to verify that $\lambda_1=\lambda$.
We proceed in an analogous way for $f_p(0,1)=q_2-f_p(0,0)$ and $f_p(1,1)=1-q_1-q_2+f_p(0,0)$ and we get $f_p(x)=\lambda f_L(x)+(1-\lambda) f_U(x)$, $x \in \{0,1\}^2$ and $0 \leq \lambda \leq 1$.
\end{proof}

Proposition \ref{pr:biv} states that $\mathcal{F}(p_1, p_2)$ is the convex hull of the upper and lower Fr\'echet bound.

In the bivariate case we can also find the domain of $\theta_{12}$ expressed as a function of the margins $p_1, p_2$. From Eq.\eqref{dens} we get
\begin{equation}\label{dens2b}
f_p(0,0)= q_1q_2(1+\theta_{12}p_1p_2).
\end{equation}
and consequently
\begin{equation}\label{ttt}
\theta_{12}=\frac{f_p(0,0)-q_1q_2}{q_1q_2p_1p_2}.
\end{equation}
Using \eqref{eq:b00} it follows
\[
\frac{f_L(0,0)-q_1q_2}{q_1q_2p_1p_2} \leq \theta_{12} \leq \frac{f_U(0,0)-q_1q_2}{q_1q_2p_1p_2}
\]

Now without loss of generality we assume $q_2\geq q_1$.
From Eq.\eqref{eq:FL} and \eqref{eq:FU} we get
\begin{enumerate}
\item if $q_1+q_2 \leq 1$ then $-\frac{1}{p_1p_2} \leq \theta_{12} \leq \frac{1}{p_1q_2}$;
\item if $q_1+q_2 >1$ then $\frac{q_1+q_2-1-q_1q_2}{q_1q_2p_1p_2} \leq \theta_{12} \leq \frac{1}{p_1q_2}$.
\end{enumerate}

Finally (see also Theorem 1 in \cite{huber2015multivariate}) we obtain the bounds for the correlation coefficient
\[
\rho_{12}=\frac{\expval [X_1X_2]-p_1p_2}{\sqrt{p_1q_1p_2q_2}}.
\]
Being $\expval[X_1X_2]=f_p(1,1)$, $f_L(1,1) \leq f_p(1,1) \leq f_U(1,1)$ and $f(1,1)=1-q_1-q_2+f(0,0)$ for any density function $f \in \FFF(p_1,p_2)$ we obtain:
\begin{enumerate}
\item if $q_1+q_2 \leq 1$ then $\frac{1-q_1-q_2-p_1p_2}{\sqrt{p_1q_1p_2q_2}} \equiv - \sqrt{\frac{q_1q_2}{p_1p_2}} \leq \rho_{12} \leq \frac{1-q_2-p_1p_2}{\sqrt{p_1q_1p_2q_2}} \equiv \sqrt{\frac{p_2q_1}{p_1q_2}}$;
\item if $q_1+q_2 >1$ then $-\frac{p_1p_2}{\sqrt{p_1q_1p_2q_2}} \equiv -\sqrt{\frac{p_1p_2}{q_1q_2}}\leq \rho_{12} \leq \frac{1-q_2-p_1p_2}{\sqrt{p_1q_1p_2q_2}} \equiv \sqrt{\frac{p_2q_1}{p_1q_2}}$.
\end{enumerate}


\section{Examples}\label{sec:examples}
In this section we show some results corresponding to different multivariate Bernoulli distributions. The algorithm is described in Section \ref{algo}.

\subsection{Trivariate Bernoulli distributions}
Let us consider the case $m=3$ and $p=\left(\frac{1}{2},\frac{1}{2},\frac{1}{2}\right)$. From Theorem \ref{radii}, solving the system of equations \eqref{H}, we get $6$ \emph{ray densities}. The \emph{ray} matrix $R_p$ is
\[
R_p=\left(
\begin{array}{rrrrrr}
0 &	0 &	0 &	0 &	0.5 &	0.25 \\
0 &	0 &	0.5 &	0.25 &	0 &	0 \\
0 &	0.5 &	0 &	0.25 &	0 &	0 \\
0.5 &	0 &	0 &	0 &	0 &	0.25 \\
0.5 &	0 &	0 &	0.25 &	0 &	0 \\
0 &	0.5 &	0 &	0 &	0 &	0.25 \\
0 &	0 &	0.5 &	0 &	0 &	0.25 \\
0 &	0 &	0 &	0.25 &	0.5 &	0 \\
\end{array}
\right)
\]
and the matrix $A_{2p}$ as defined in Proposition \ref{pr:mu2} is
\[
A_{2p}=\left(
\begin{array}{rrrrrr}
0.5 &	0 &	0 &	0.25 &	0.5 &	0.25 \\
0 &	0.5 &	0 &	0.25 &	0.5 &	0.25 \\
0 &	0 &	0.5 &	0.25 &	0.5 &	0.25 \\
\end{array}
\right).
\]
Using Eq. \eqref{eq:bounds} we get
\[
-1 \leq \rho_{ij} \leq 1, \; i,j=1,2,3, i<j.
\]

Let us consider the case in which the $X_i, i=1,\ldots,3$ must be not correlated. We want to find a distribution $F_p \in \mathcal{F}(\frac{1}{2},\frac{1}{2},\frac{1}{2})$ such that $\rho_{12}=\rho_{13}=\rho_{23}=0$.
From Eq. \eqref{eq:rho} we obtain $\lambda_1=\lambda_2=\lambda_3=\lambda_5=0.25$ and $\lambda_4=\lambda_6=0$. The corresponding density is uniform, $f_p(x)=\frac{1}{8}, x \in \mathcal{S}_3$ as expected.

If we choose $\rho_{12}=0.2, \rho_{13}=-0.3$ and $\rho_{23}=0.4$, we obtain $\lambda_1=0.275,
\lambda_2=0.025, \lambda_3=0.375, \lambda_4=0, \lambda_5=0.325$ and $\lambda_6=0$ as one of the solutions of Eq. \eqref{eq:rho}.  The corresponding density is
\[
\ff_p^T=
\left(0.1625, \; 0.1875, \; 0.0125, \; 0.1375, \; 0.1375, \; 0.0125, \; 0.1875, \; 0.1625
\right).
\]
If we choose $\rho_{12}=0.9, \rho_{13}=-0.3$ and $\rho_{23}=0.6$, we do not find any $f_p$ with such correlations, even if each $\rho_{ij}$ satisfies the constraints found for bivariate distributions, which, as we said before, in this case are $-1\leq\rho_{ij}\leq1, \, i,j=1,2,3, \, i < j$.

If we search for a feasible $\rho^\star$ which is the correlation matrix  closest\footnote{The distance can be freely chosen. In this example we used the Euclidean distance.} to the desired $\rho$ we obtain $\rho_{12}^\star=0.6\overline{3}$, $\rho_{13}^\star=0.3\overline{3}$ and $\rho_{23}^\star=-0.0\overline{3}$. The corresponding density is
\[
(\ff_p^\star)^{T}=
\left(
0.241\overline{6}, \;
0, \;
0.091\overline{6}, \;
0.166\overline{6}, \;
0.166\overline{6}, \;
0.091\overline{6}, \;
0, \;
0.241\overline{6}
\right).
\]
Let us now consider the case $p=\left(\frac{1}{4},\frac{3}{4},\frac{1}{2}\right)$. The ray matrix $R_p$ contains $6$ margins
\[R_p=\left(
\begin{array}{rrrrrr}
0 &	0 &	0 &	0 &	0.25 &	0.25 \\
0.25 &	0.25 &	0.5 &	0.5 &	0 &	0.25 \\
0 &	0.25 &	0 &	0 &	0 &	0 \\
0.25 &	0 &	0 &	0 &	0.25 &	0 \\
0.25 &	0 &	0 &	0.25 &	0 &	0 \\
0.25 &	0.5 &	0.25 &	0 &	0.5 &	0.25 \\
0 &	0 &	0.25 &	0 &	0 &	0 \\
0 &	0 &	0 &	0.25 &	0 &	0.25 \\
\end{array}
\right)
\]
and the $A_{2p}$ matrix is
\[A_{2p}=\left(
\begin{array}{rrrrrr}
0.25 &	0 &	0 &	0.25 &	0.25 &	0.25 \\
0.25 &	0.5 &	0.25 &	0.25 &	0.5 &	0.5 \\
0 &	0 &	0.25 &	0.25 &	0 &	0.25 \\
\end{array}
\right).
\]
Using Eq. \eqref{eq:bounds} we get
\[
-1 \leq \rho_{12} \leq 0.333 \text{ and } -0.577 \leq \rho_{13},\rho_{23} \leq 0.577.
\]

If we choose $\rho_{12}=0.3, \rho_{13}=0.25$ and $\rho_{23}=-0.1$, we obtain $\lambda_1=0.2835,
\lambda_2=0.025, \lambda_3=0, \lambda_4=0, \lambda_5=0.2781$ and $\lambda_6=0.4134$.  The corresponding density is
\[
\ff_p^T=
\left(
0.1729, \;
0.1805, \;
0.0063, \;
0.1404, \;
0.0709, \;
0.3258, \;
0, \;
0.1033\;
\right).
\]
As the last example of trivariate Bernoulli distribution we consider $p=\left(\frac{1}{4},\frac{1}{7},\frac{1}{3}\right)$. The ray matrix $R_p$ (rounded to the third decimal digit) has $11$ \emph{ray densities}
\[
R_p=\left(
\begin{array}{rrrrrrrrrrr}
0 &	0 &	0 &	0 &	0 &	0 &	0 &	0 &	0.06 &	0.143 &	0.143 \\
0 &	0 &	0 &	0 &	0.083 &	0.143 &	0.143 &	0.113 &	0.083 &	0 &	0 \\
0 &	0 &	0.107 &	0.25 &	0.25 &	0 &	0.19 &	0.22 &	0.19 &	0 &	0.107 \\
0.333 &	0.333 &	0.226 &	0.083 &	0 &	0.19 &	0 &	0 &	0 &	0.19 &	0.083 \\
0 &	0.143 &	0.143 &	0 &	0 &	0 &	0 &	0.03 &	0 &	0 &	0 \\
0.143 &	0 &	0 &	0.143 &	0.06 &	0 &	0 &	0 &	0 &	0 &	0 \\
0.25 &	0.107 &	0 &	0 &	0 &	0.25 &	0.06 &	0 &	0 &	0.107 &	0 \\
0.274 &	0.417 &	0.524 &	0.524 &	0.607 &	0.417 &	0.607 &	0.637 &	0.667 &	0.56 &	0.667 \\
\end{array}
\right).
\]
Using Eq. \eqref{eq:bounds} we get
\[
-0.236 \leq \rho_{12} \leq 0.707, -0.408 \leq \rho_{13} \leq 0.816 \text{ and } -0.289 \leq \rho_{23} \leq 0.577.
\]


If we choose $\rho_{12}=0.3, \rho_{13}=0.25$ and $\rho_{23}=-0.2$, we obtain 
\[
\ff_p^T=
\left(
0.0146, \;
0, \;
0.1197, \;
0.1990, \;
0.0665, \;
0.0617, \;
0.0491, \;
0.4893
\right).
\]

\subsection{Multivariate $m=5$ Bernoulli distributions}
Let us consider the case $p=\left(\frac{1}{2},\frac{1}{2},\frac{1}{2},\frac{1}{2},\frac{1}{2}\right)$. We obtain $2,712$ {\it  ray densities}.
If we choose
$\rho_{12}=0.3, \rho_{13}= 0.2, \rho_{14}=0.2, \rho_{15}=0.1,
\rho_{23}=-0.2, \rho_{24}=0.3, \rho_{25}=0.2,
\rho_{34}=0.2, \rho_{35}=0.1$ and $\rho_{45}= -0.2$, we obtain
{\tiny
\[
\ff_p=
\left(
\begin{array}{r}
0.025 \\
0 \\
0.0625 \\
0.0125 \\
0 \\
0.025 \\
0.025 \\
0.05 \\
0.1 \\
0.025 \\
0 \\
0.05 \\
0.0875 \\
0.0375 \\
0 \\
0 \\
0.1 \\
0.05 \\
0.0125 \\
0.0625 \\
0 \\
0 \\
0.05 \\
0.025 \\
0 \\
0 \\
0 \\
0 \\
0.0125 \\
0.0375 \\
0.025 \\
0.125 \\
\end{array}
\right).
\]
}
\subsection{Multivariate $m\geq6$ Bernoulli distributions}
For $m=6$ and $p=\left(\frac{1}{2},\frac{1}{2},\frac{1}{2},\frac{1}{2},\frac{1}{2},\frac{1}{2}\right)$ we obtain $707,264$ {\it  ray densities}. In general we observe that if the number of rays is too large with respect to the available computer power and if the objective can be reduced to the problem of finding just \emph{one} density $f \in \mathbb{F}_m$ with given margins $p$ and second order moments $\mu_2$, it is enough to solve the system
\[
\begin{cases}
(M^{\otimes m})_{1} f = p \\
(M^{\otimes m})_{2} f = \mu_2
\end{cases}
\]
using standard linear programming tools (e.g. \cite{berkelaar2004lpsolve}).

\subsection{The algorithm}\label{algo}
In this section we briefly describe the algorithm that we used in Section \ref{sec:examples}. Given $m$, $p$ and $\rho$ as input the algorithm returns the ray matrix $R_p$ and, if it exists, the density $f_p$, which has Bernoulli $B(p_i), i=1,\ldots,m$ as marginal distribution and pairwise correlations $\rho=(\rho_{ij}, i,j=1,\ldots,m, i<j)$. The algorithm has the following main steps:
\begin{enumerate}
	\item the construction of the matrix $H$, see \eqref{H} of Theorem \ref{radii};
	\item the generation of the \emph{ray matrix} $R_p$;
	\item the construction of the density $f_p$ as the solution of the system \eqref{eq:rho} of Theorem \ref{radii}.
\end{enumerate}

The construction of the matrix $H$ and of the density $f_p$ is implemented in SAS/IML. In particular, the system \eqref{eq:rho} is solved using the Proc Lpsolve that is part of SAS/QC. The rays are generated using 4ti2 (\cite{4ti2}). The software code is available on request. We performed the analysis using a standard laptop (CPU Intel core I7-2620M CPU 2.70GHz 2.70GHz, RAM 8GB).

\section{Discussion}\label{Concl}
The proposed approach can be applied to \emph{any} given set of moments, even of different orders.  All the results given for moments and correlations can be easily adapted to other widely-used measures of dependence, such as Kendall's $\tau$ and Spearman's $\rho$ . Furthermore,  the polynomial representation of the distributions of any Fr\'echet class provides a link to copulas, which are a powerful instrument to model dependence.

\section{Acknowledgements}
Roberto Fontana wishes to thank professor Antonio Di Scala (Politecnico di Torino, Department of Mathematical Sciences) and professor Giovanni Pistone (Collegio Carlo Alberto, Moncalieri) for the helpful discussions he had with them.

\bibliographystyle{plain}

\end{document}